\newcommand{\newsection}[1]{\setcounter{equation}{0} \section{#1}}
\newcommand{\clb}{\mathcal{B}}
\newcommand{\cle}{\mathcal{E}}
\newcommand{\clh}{\mathcal{H}}
\newcommand{\cln}{\mathcal{N}}
\newcommand{\clw}{\mathcal{W}}
\newcommand{\D}{\mathbb{D}}
\newcommand{\C}{\mathbb{C}}
\newcommand{\bS}{\mathbb{S}}
\newcommand{\raro}{\rightarrow}
\newcommand{\NI}{\noindent}
\newcommand\la{{\langle }}
\newcommand\ra{{\rangle}}
\newtheorem{Theorem}{\sc Theorem}[section]
\newtheorem{Lemma}[Theorem]{\sc Lemma}
\newtheorem{Proposition}[Theorem]{\sc Proposition}
\newtheorem{Corollary}[Theorem]{\sc Corollary}
\newtheorem{Example}[Theorem]{\sc Example}
\newtheorem{Remark}[Theorem]{\sc Remark}
\newtheorem{Note}[Theorem]{\sc Note}
\newtheorem{Question}{\sc Question}
\newtheorem{ass}[Theorem]{\sc Assumption}
\newtheorem{Definition}[Theorem]{\sc Definition}
\newcommand{\bt}{\begin{Theorem}}
\def\beginlem{\begin{Lemma}}
\def\beginprop{\begin{Proposition}}
\def\begincor{\begin{Corollary}}
\def\begindef{\begin{Definition}}
\def\beginexamp{\begin{Example}}
\def\beginrem{\begin{Remark}}
\def\beginq{\begin{Question}}
\def\beginass{\begin{ass}}
\def\beginnote{\begin{Note}}
\newcommand{\et}{\end{Theorem}}
\def\endlem{\end{Lemma}}
\def\endprop{\end{Proposition}}
\def\endcor{\end{Corollary}}
\def\enddef{\end{Definition}}
\def\endexamp{\end{Example}}
\def\endrem{\end{Remark}}
\def\endq{\end{Question}}
\def\endass{\end{ass}}
\def\endnote{\end{Note}}
\begin{document}

\title{Left-invertibility of rank-one perturbations}


\author[Das]{Susmita Das}
\address{Indian Statistical Institute, Statistics and Mathematics Unit, 8th Mile, Mysore Road, Bangalore, 560059,
India}
\email{susmita.das.puremath@gmail.com}

\author[Sarkar]{Jaydeb Sarkar}
\address{Indian Statistical Institute, Statistics and Mathematics Unit, 8th Mile, Mysore Road, Bangalore, 560059,
	India}
\email{jay@isibang.ac.in, jaydeb@gmail.com}

	
\subjclass{}

\subjclass{Primary: 47A55, 47B37, 30H10; Secondary: 47B32, 46B50, 47B07}
\keywords{Left-invertible operators, rank-one perturbations, shifts, isometries, diagonal operators, reproducing kernel Hilbert spaces}
	
\begin{abstract}
For each isometry $V$ acting on some Hilbert space and a pair of vectors $f$ and $g$ in the same Hilbert space, we associate a nonnegative number $c(V;f,g)$ defined by
\[
c(V; f,g) = (\|f\|^2 - \|V^*f\|^2) \|g\|^2 + |1 + \langle V^*f , g\rangle|^2.
\]
We prove that the rank-one perturbation $V + f \otimes g$ is left-invertible if and only if
\[
c(V;f,g) \neq 0.
\]
We also consider examples of rank-one perturbations of isometries that are shift on some Hilbert space of analytic functions. Here, shift refers to the operator of multiplication by the coordinate function $z$. Finally, we examine $D + f \otimes g$, where $D$ is a diagonal operator with nonzero diagonal entries and $f$ and $g$ are vectors with nonzero Fourier coefficients. We prove that $D + f\otimes g$ is left-invertible if and only if $D+f\otimes g$ is invertible.
\end{abstract}
	
\maketitle

\tableofcontents

\newsection{Introduction }\label{sec: intro}

Rank-one operators are the simplest as well as easy to spot among all bounded linear operators on Hilbert spaces. Indeed, for each pair of nonzero vectors $f$ and $g$ in a Hilbert space $\clh$, one can associate a \textit{rank-one} operator $f \otimes g \in \clb(\clh)$ defined by
\[
(f \otimes g) h = \langle h, g \rangle f \qquad (h \in \clh).
\]
These are the only operators whose range spaces are one-dimensional. Here $\clb(\clh)$ denotes the algebra of all bounded linear operators on $\clh$. All Hilbert spaces in this paper are assumed to be infinite dimensional, separable, and over $\C$. Note that finite-rank operators, that is, linear sums of rank-one operators are norm dense in the ideal of compact operators, where one of the most important and natural examples of a noncompact operator is an isometry: A linear operator $V$ on $\clh$ is an \textit{isometry} if $\|Vh\|=\|h\|$ for all $h \in \clh$, or equivalently
\[
V^*V = I_{\clh}.
\]
Along this line, left-invertible operators (also known as, by a slight abuse of terminology, ``operators close to an isometry'' \cite{Shimorin}) are also natural examples of noncompact operators: $T \in \clb(\clh)$ is \textit{left-invertible} if $T$ is bounded below, that is, there exists $\epsilon >0$ such that $\|Th\| \geq \epsilon \|h\|$ for all $h \in \clh$, or equivalently, there exists $S \in \clb(\clh)$ such that
\[
ST = I_{\clh}.
\]
The intent of this paper is to make a modest contribution to the delicate structure of rank-one perturbations of bounded linear operators \cite{T Kato}. More specifically, this paper aims to introduce some methods for the left-invertibility of rank-one perturbations of isometries and, to some extent, diagonal operators. The following is the central question that interests us:

\begin{Question}
Find necessary and sufficient conditions for left-invertibility of the rank-one perturbation $V + f \otimes g$, where $V \in \clb(\clh)$ is an isometry or a diagonal operator and $f$ and $g$ are vectors in $\clh$.
\end{Question}

The answer to this question is completely known for isometries. Given an isometry $V \in \clb(\clh)$ and vectors $f, g \in \clh$, the perturbation $X = V + f \otimes g$ is an isometry if and only if there exist a unit vector $h \in \clh$ and a scalar $\alpha$ of modulus one such that $f = (\alpha - 1) h$ and $g = V^* h$. In other words, a rank-one perturbation $X$ of the isometry $V$ is an isometry if and only if there exists a unit vector $f \in \clh$ and a scalar $\alpha$ of modulus one such that
\begin{equation}\label{eqn: Iso Old classif}
X = V + (\alpha - 1) f \otimes V^*f.
\end{equation}
This result is due to Nakamura \cite{Nakamura, Nakamura 1} (and also see \cite{Serban Turcu}). For more on rank-one perturbations of isometries and related studies, we refer the reader to \cite{BT1, Choi, Clark, Fuhrmann} and also \cite{Ionascu}.

In this paper, we extend the above idea to a more general setting of left-invertibility of rank-one perturbations of isometries. In this case, however, left-invertibility of rank-one perturbations of isometries completely relies on certain real numbers. More specifically, given an isometry $V \in \clb(\clh)$ and a pair of vectors $f$ and $g$ in $\clh$, we associate a real number $c(V; f,g)$ defined by
\begin{equation}\label{eqn: def of c}
c(V; f,g) = (\|f\|^2 - \|V^*f\|^2) \|g\|^2 + |1 + \la V^*f , g\ra|^2.
\end{equation}
This is the number which precisely determine the left-invertibility of $V + f \otimes  g$:

\begin{Theorem}\label{thm: Left inv}	
Let	$V \in \clb(\clh)$ be an isometry, and let $f$ and $g$ be vectors in $\clh$. Then $V + f \otimes g$ is left invertible if and only if
\[
c(V; f,g) \neq 0.
\]
\end{Theorem}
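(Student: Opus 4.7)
The plan is to reduce left-invertibility of $T := V + f \otimes g$ to injectivity, and then to analyze the kernel equation directly.

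For the reduction, recall that since $T^*T \geq 0$, the operator $T$ is bounded below if and only if $T^*T$ is invertible. Using $V^*V = I$, a direct expansion gives
\[
T^*T = I + (V^*f) \otimes g + g \otimes (V^*f) + \|f\|^2\, g \otimes g.
\]
The perturbation has rank at most two, so $T^*T = I + K$ with $K$ compact (indeed finite-rank). Hence $T^*T$ is Fredholm of index zero, and invertibility reduces to injectivity, i.e.\ to $\ker T = \{0\}$. This is the key structural step: it lets me replace the subtle analytic condition of being bounded below by the purely algebraic condition of triviality of $\ker T$.

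Next I would solve $Th = 0$, which reads $Vh = -\langle h, g\rangle f$. If $\langle h, g\rangle = 0$, then $Vh = 0$ and isometry of $V$ yields $h = 0$. Otherwise, setting $\lambda := \langle h, g\rangle \neq 0$ and applying $V^*$ gives $h = -\lambda V^*f$. Substituting back into $\lambda = \langle h, g\rangle$ yields $\lambda\bigl(1 + \langle V^*f, g\rangle\bigr) = 0$, so $1 + \langle V^*f, g\rangle = 0$. Compatibility between $Vh = -\lambda f$ and $h = -\lambda V^*f$ then forces $VV^*f = f$, which for an isometry $V$ is equivalent to $\|f\|^2 = \|V^*f\|^2$. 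Conversely, if these two relations hold and $f \neq 0$, then $V^*f \neq 0$ (since $f = VV^*f$), and $h := V^*f$ satisfies
\[
Th = VV^*f + \langle V^*f, g\rangle f = f - f = 0,
\]
giving a nonzero element of $\ker T$.

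Finally, I would translate these conditions into the statement about $c(V;f,g)$. Both summands in the definition of $c$ are nonnegative, so $c(V;f,g) = 0$ iff $1 + \langle V^*f, g\rangle = 0$ and $(\|f\|^2 - \|V^*f\|^2)\|g\|^2 = 0$. The first equation already forces $f \neq 0$ and $g \neq 0$, so the second collapses to $\|f\|^2 = \|V^*f\|^2$. This matches exactly the pair of conditions extracted above for nontriviality of $\ker T$, completing the equivalence. The only subtle point, and the main obstacle to a clean write-up, is verifying that the degenerate cases $f = 0$, $g = 0$, or $V^*f = 0$ are correctly absorbed into the statement; but each of them is a short sanity check against the formula defining $c(V;f,g)$.
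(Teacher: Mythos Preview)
Your argument is correct and complete. The Fredholm reduction is sound: $T^*T = I + K$ with $K$ of rank at most two, so $T^*T$ is a compact perturbation of the identity, hence Fredholm of index zero, and invertibility is equivalent to injectivity; since $\ker T^*T = \ker T$, left-invertibility of $T$ reduces to $\ker T = \{0\}$. Your kernel analysis is clean, and the translation to $c(V;f,g)$ via nonnegativity of the two summands handles all degenerate cases.

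This is, however, a genuinely different route from the paper. The paper never invokes Fredholm theory or analyses $\ker T$. Instead, for the forward direction it sets $L = (T^*T)^{-1}$, applies the identity $I = L T^*T$ to the vectors $V^*f$ and $g$, and manipulates the resulting linear system until the assumption $c(V;f,g)=0$ forces the absurdity $\langle V^*f,g\rangle = 1 + \langle V^*f,g\rangle$. For the converse it writes down an explicit candidate
\[
X = I + \frac{1}{c}\Big(\|g\|^2\, V^*f \otimes V^*f + (\|V^*f\|^2 - \|f\|^2)\, g \otimes g - (R + R^*)\Big),
\]
with $R = (1 + \langle g, V^*f\rangle)\, V^*f \otimes g$, and verifies by brute-force expansion that $X T^*T = I$. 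The payoff of the paper's approach is a concrete formula for a left inverse of $V + f\otimes g$, recorded separately as a remark. Your approach is shorter and more conceptual but non-constructive; if one also wanted the explicit left inverse, a small supplement would be needed.
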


Note that since $V$ is an isometry, we have $\|V^* f\| \leq \|f\|$, and hence, the quantity $c(V;f,g)$ is always nonnegative. Therefore, the condition $c(V; f,g) \neq 0$ in the above theorem can be rephrased as saying that $c(V; f,g) > 0$, or equivalently, $\|V^* f\| < \|f\|$ or $1 + \la V^*f , g\ra \neq 0$. However, in what follows, we will keep the constant $c(V; f,g)$ in our consideration. Not only $c(V; f,g)$ plays a direct role in the proof of the above theorem but, as we will see in Remark \ref{rem: Def of L}, this quantity also appears in the explicit representation of a left inverse of a left-invertible perturbation.

The following conclusion is now easy:

\begin{Corollary}
Let	$V \in \clb(\clh)$ be an isometry, and let $f$ and $g$ be vectors in $\clh$. Then $V + f \otimes g$ is not left-invertible if and only if
\[
\|V^* f\| = \|f\| \text{ and } \la V^*f , g\ra = -1.
\]
\end{Corollary}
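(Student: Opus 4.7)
The plan is to derive this corollary as an immediate consequence of Theorem \ref{thm: Left inv} by analyzing exactly when the quantity $c(V;f,g)$ vanishes. Since $V$ is an isometry, the remark following Theorem \ref{thm: Left inv} already observes that $\|V^*f\| \le \|f\|$, so the expression
\[
c(V;f,g) = (\|f\|^2 - \|V^*f\|^2)\|g\|^2 + |1 + \la V^*f, g\ra|^2
\]
is a sum of two nonnegative real numbers. By Theorem \ref{thm: Left inv}, $V + f \otimes g$ fails to be left-invertible precisely when $c(V;f,g) = 0$, which forces both summands to vanish simultaneously.

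Next, I would translate the vanishing of each summand into conditions on $f$ and $g$. The second term $|1 + \la V^*f, g\ra|^2$ vanishes if and only if $\la V^*f, g\ra = -1$. For the first term $(\|f\|^2 - \|V^*f\|^2)\|g\|^2$, vanishing means either $\|V^*f\| = \|f\|$ or $g = 0$.

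The only subtle point, and really the only thing to check, is that the case $g=0$ cannot occur together with the second condition: if $g = 0$, then $\la V^*f, g\ra = 0 \neq -1$, contradicting the vanishing of the second summand. Hence the case $g = 0$ is ruled out automatically, and the simultaneous vanishing reduces exactly to $\|V^*f\| = \|f\|$ together with $\la V^*f, g\ra = -1$. This gives the stated equivalence. There is essentially no obstacle here, since the entire content is packaged into Theorem \ref{thm: Left inv}; the corollary is a clean unpacking of the definition of $c(V;f,g)$ together with the observation that $g = 0$ is automatically incompatible with $\la V^*f, g\ra = -1$.
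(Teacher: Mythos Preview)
Your proposal is correct and matches the paper's approach: the paper does not give an explicit proof but simply notes beforehand that $c(V;f,g)$ is a sum of two nonnegative terms, so that $c(V;f,g)\neq 0$ is equivalent to $\|V^*f\|<\|f\|$ or $1+\la V^*f,g\ra\neq 0$, and declares the corollary ``now easy.'' Your argument is exactly this unpacking, with the added care of explicitly ruling out the $g=0$ case.
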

	
The above theorem also provides us with a rich source of natural examples of left-invertible operators. For instance, let us denote by $\D$ the open unit disc in $\C$. Consider the shift $M_z$ on the $\cle$-valued Hardy space $H^2_{\cle}(\D)$ over $\D$, where $\cle$ is a Hilbert space. Then for any
\[
\eta \in \ker M_z^* = \cle \subseteq H^2_{\cle}(\D),
\]
and nonzero vector $g \in H^2_{\cle}(\D)$, the rank-one perturbation $M_z + \eta \otimes g$ is left-invertible. A similar conclusion holds if $f, g \in H^2(\D)$ and
\[
\la M_z^*f , g\ra \neq -1.
\]
Section \ref{sec: LI} contains the proof of the above theorem. In Section \ref{sec: Analytic}, we discuss a follow-up question: Characterizations of shifts that are rank-one perturbations of isometries. Here a shift refers to the multiplication operator $M_z$ on some Hilbert space of analytic functions (that is, a reproducing kernel Hilbert space) on a domain in $\C$. Note, however, that our analysis will be mostly limited to the level of elementary examples.

In Section \ref{sec: diaonal}, we study rank-one perturbations of diagonal operators. It is well known that the structure of rank-one perturbations of diagonal operators is also complicated (cf. \cite{Albrecht, Fang and Xia, Ionascu}). Moreover, comparison between perturbations of diagonal operators and that of isometries is perhaps inevitable if one views diagonals as normal operators and isometries as one of the best tractable non-normal operators. Here we consider $D + f \otimes g$ on some Hilbert space $\clh$, where $D$ is a diagonal operator with nonzero diagonal entries with respect to an orthonormal basis $\{e_n\}_{n=0}^\infty$ of $\clh$. We also assume that the Fourier coefficients of $f$ and $g$ with respect to $\{e_n\}_{n=0}^\infty$ are nonzero. In Theorem \ref{thm: inv}, we prove:

\begin{Theorem}
$D + f\otimes g$ is left-invertible if and only if $D+f\otimes g$ is invertible.
\end{Theorem}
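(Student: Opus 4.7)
The ``if'' direction is immediate. For the converse, the plan is to argue by contradiction: assume $T := D + f \otimes g$ is left-invertible but not invertible. Left-invertibility already gives $\ker T = \{0\}$ and $\mathrm{ran}\, T$ closed, so non-invertibility is equivalent to $\ker T^* \neq \{0\}$, where $T^* = D^* + g \otimes f$. The goal is to show this is incompatible with $T$ being bounded below.

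The first step is to unpack $\ker T^* \neq \{0\}$. Writing $T^* h = 0$ coordinate-wise and using $d_n \neq 0$, one finds $h_n = -\mu g_n/\overline{d_n}$ with $\mu := \langle h, f\rangle \neq 0$. Membership of $h$ in $\clh$ forces $\sum |g_n/d_n|^2 < \infty$, and substituting back into the definition of $\mu$ (then taking complex conjugates) yields the scalar coupling identity
\[
\sum_{n \geq 0} \frac{f_n \overline{g_n}}{d_n} = -1.
\]
If in addition $\sum |f_n/d_n|^2 < \infty$, then a symmetric calculation produces a nonzero vector $v := -\sum (f_n/d_n) e_n \in \clh$ (nonzero since each $f_n \neq 0$) satisfying $Dv = -f$ and $\langle v, g\rangle = 1$, so $Tv = 0$, contradicting $\ker T = \{0\}$. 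Hence one may further assume $\sum |f_n/d_n|^2 = \infty$.

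The main step is to convert these two facts into an explicit obstruction to $T$ being bounded below. I would define the truncated would-be kernel vectors
\[
u^{(N)} := -\sum_{n=0}^{N} \frac{f_n}{d_n}\, e_n, \qquad s_N := \langle u^{(N)}, g\rangle = -\sum_{n=0}^{N} \frac{f_n \overline{g_n}}{d_n},
\]
so that $\|u^{(N)}\|^2 \to \infty$ by the divergence in the previous step, while $s_N \to 1$ by the coupling identity. With $P_N$ the orthogonal projection onto $\mathrm{span}\{e_0,\dots,e_N\}$, the identity $Du^{(N)} = -P_N f$ gives the telescoping cancellation
\[
T u^{(N)} = -P_N f + s_N f = (s_N - 1) P_N f + s_N (I - P_N) f,
\]
whose norm tends to zero because $s_N \to 1$, $\|P_N f\| \to \|f\|$, and $\|(I-P_N)f\| \to 0$. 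Combined with $\|u^{(N)}\| \to \infty$, this contradicts the lower bound for $T$.

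The main obstacle, such as it is, is arithmetic bookkeeping with the complex conjugations: the expression literally produced by $\langle h, f\rangle = \mu$ is the complex conjugate of $1 + \sum f_n \overline{g_n}/d_n$, and one must straighten this out before the cancellation in the display above takes the clean form shown. Once that identification is correct, everything else is a routine asymptotic estimate.
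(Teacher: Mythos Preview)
Your argument is correct; the only delicate point---convergence of $s_N$---is secured by Cauchy--Schwarz once $\sum_n |g_n/d_n|^2 < \infty$ has been extracted from $h \in \clh$, since then $\sum_n |f_n \overline{g_n}/d_n| \leq \|f\|\,(\sum_n |g_n/d_n|^2)^{1/2} < \infty$.

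The paper takes a different route. Rather than passing through $\ker T^*$, it first invokes Lemma~\ref{thm: bounded below} (left-invertibility of $T$ forces $D$ itself to be invertible), which immediately makes $\{f_n/d_n\}$ square summable; your divergent case $\sum_n |f_n/d_n|^2 = \infty$ therefore never arises in the paper's argument. From there, injectivity of $T$ together with Proposition~\ref{thm: kernel} gives $r \neq 0$, and Proposition~\ref{thm: range} then places every basis vector in $\text{ran}\, T$, so $T$ is onto. Your approach trades Lemma~\ref{thm: bounded below} and the range criterion Proposition~\ref{thm: range} for the explicit approximate-kernel sequence $u^{(N)}$: it is self-contained and exhibits a concrete witness against boundedness below, while the paper's version is shorter once the preliminary tools are in hand and isolates Lemma~\ref{thm: bounded below} as a fact of independent interest.
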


In Section \ref{sec: Davidson}, we observe that the parameterized spaces considered in the work of Davidson, Paulsen, Raghupathi and Singh \cite{Paulsen Raghupati} is connected to rank-one perturbations of isometries. In the final section, Section \ref{sec: concluding}, we compute $c(V;f,g)$ when $V + f \otimes g$ is an isometry and make some further comments on rank-one perturbations of diagonal operators.

Finally, we remark that the last two decades have witnessed more intense interest in the theory of left-invertible operators starting from the work of Shimorin \cite{Shimorin}. For instance, see \cite{Badea} and references therein.

\newsection{Proof of Theorem \ref{thm: Left inv}}\label{sec: LI}

In this section, we present the proof of the left-invertibility criterion of rank-one perturbations of isometries. First note that by expanding the right-hand side of \eqref{eqn: def of c}, we have
\begin{equation}\label{eqn: cVfg}
c(V; f,g) = 1 + \|f\|^2\|g\|^2 + 2 \text{Re} \la V^*f, g\ra + |\la V^*f , g\ra|^2 - \|V^*f\|^2 \|g\|^2.
\end{equation}
Next, we make a list of the most commonly used rank-one operator arithmetic, which will be used several times in what follows. Let $f, g\in \clh$ and let $T \in \clb(\clh)$. The following holds true:
\begin{enumerate}
\item $(f \otimes g)^* = g \otimes f$.
\item ${\alpha} (f \otimes g) = ({\alpha} f) \otimes g = f \otimes (\bar{\alpha} g)$ for all $\alpha \in \C$.
\item $(f \otimes g) (f_1 \otimes g_1) = \langle f_1, g \rangle f \otimes g_1$ for all $f_1, g_1 \in \clh$.
\item $T (f \otimes g) = (Tf) \otimes g$ and so $(f \otimes g) T = f \otimes (T^*g)$.
\item $\|f \otimes g\| = \|f\| \|g\|$.
\end{enumerate}

Of course, part (2) is a particular case of part (4).

Finally, we note that $T \in \clb(\clh)$ is left-invertible if and only if $T^*T$ is invertible. Indeed, if $T$ is left-invertible, then $T^*T$ is an injective positive operator. Since $T$ is bounded below, we know that $T^*T$ is also bounded below and hence of closed range. Therefore, $T^*T$ is invertible. Conversely, suppose $X$ is the inverse of $T^*T$. Then $(XT^*) T = I$ implies that $T$ is left-invertible.

We are now ready for the proof of the theorem.

\begin{proof}[Proof of Theorem \ref{thm: Left inv}] The statement trivially holds for $f = 0$ or $g = 0$. So assume that both $f$ and $g$ are nonzero vectors. Suppose that $V + f \otimes g$ on $\clh$ is left-invertible. Then $(V+ f \otimes g)^*(V+ f \otimes g)$ is invertible with the inverse, say $L$. We have
\[
I = L (V+ f \otimes g)^*(V + f \otimes g) = L (V ^* + g \otimes f)(V+ f \otimes g).
\]
Since $V^* V = I$, it follows that
\[
\begin{split}
I & = L (V ^* + g \otimes f)(V+ f \otimes g)
\\
& = L(I +  V^*f \otimes g + g \otimes V^*f + \|f\|^2 g \otimes g)
\\
& = L + (LV^*f) \otimes g + Lg \otimes V^*f +\|f\|^2 Lg \otimes g.
\end{split}
\]
In particular, evaluating both sides on the vector $V^* f$ and $g$, respectively, we get
\[
\begin{split}
V^*f & = L V^*f + \la V^*f, g\ra L V^*f + \|V^*f\|^2 Lg+\|f\|^2\la V^*f ,g\ra Lg
\\
& = (\la V^*f, g \ra + 1)L V^*f +(\|V^*f\|^2+\|f\|^2 \la V^*f ,g \ra) Lg,
\end{split}
\]
and
\[
\begin{split}
g & = Lg + \|g\|^2 L V^*f + \la g,V^*f\ra Lg +\|f\|^2\|g\|^2 Lg
\\
& = \|g\|^2 L V^*f + (1 + \la g, V^*f\ra + \|f\|^2\|g\|^2)Lg
\\
& = \|g\|^2 L V^*f + \alpha Lg,
\end{split}
\]
where $\alpha = 1 + \la g, V^*f\ra + \|f\|^2\|g\|^2$. The latter equality implies that
\[
L V^*f = \frac{1}{\|g\|^2} (I - \alpha L)g.
\]
Now plug the value for $L V^*f$ into the expression for $V^*f$ above to get
\[
V^*f = \frac{1}{\|g\|^2} (1 + \la V^*f, g \ra) (I - \alpha L)g + ( \|V^*f\|^2+\|f\|^2 \la V^*f ,g \ra)Lg
\]
A little rearrangement then shows that
\begin{equation}\label{eqn: MP 1}
V^*f = \frac{1}{\|g\|^2} \Big(1 + \la V^*f, g \ra\Big) g + \Big( \|V^*f\|^2 + \|f\|^2 \la V^*f ,g \ra - \frac{\alpha}{\|g\|^2} (1 + \la V^*f, g \ra) \Big)Lg.
\end{equation}
We compute
\[
\begin{split}
\alpha (1 + \la V^*f, g \ra) & =  (1 + \la V^*f, g \ra) (1 + \la g, V^*f\ra + \|f\|^2\|g\|^2)
\\
& = \la V^*f, g \ra \|f\|^2\|g\|^2 + 2 \text{Re}\la V^*f, g \ra + |\la V^*f, g \ra|^2 + \|f\|^2\|g\|^2 + 1
\\
& = \la V^*f, g \ra \|f\|^2\|g\|^2 + \|V^*f\|^2 \|g\|^2 + c(V; f,g),
\end{split}
\]
where the last equality follows from the definition of $c(V; f,g)$ as in \eqref{eqn: cVfg}. Now we simplify the coefficient of $Lg$, say $a$, in the right-hand side of \eqref{eqn: MP 1} as follows:
\[
\begin{split}
a & = \|V^*f\|^2 + \|f\|^2 \la V^*f, g\ra - \frac{1}{\|g\|^2} \Big(\la V^*f, g \ra \|f\|^2\|g\|^2 + \|V^*f\|^2 \|g\|^2 + c(V; f,g) \Big)
\\
& = - \frac{1}{\|g\|^2}  c(V; f,g).
\end{split}
\]
Consequently, by \eqref{eqn: MP 1}, we have
\[
V^*f = \frac{1}{\|g\|^2} (1 + \la V^*f, g \ra) g - c(V; f,g) \frac{1}{\|g\|^2} Lg.
\]
Suppose if possible that $c(V; f,g) = 0$. Then $V^*f = \frac{1}{\|g\|^2} (1 + \la V^*f, g \ra) g$, and so
\[
\begin{split}
\langle V^*f, g \rangle & = \frac{1}{\|g\|^2} \langle (1 + \la V^*f, g \ra) g, g \rangle
\\
& = 1 + \la V^*f, g \ra,
\end{split}
\]
which is absurd. This contradiction proves that $c(V; f,g) \neq 0$.

\NI Conversely, suppose that $c:=c(V; f,g) \neq 0$. Set $R = (1+ \la g, V^*f\ra)V^*f \otimes g$, and let
\begin{equation}\label{eqn: def X}
X = I + \frac{1}{c} \{\|g\|^2 V^*f \otimes V^*f +(\|V^*f\|^2-\|f\|^2)g \otimes g - (R + R^*)\}.
\end{equation}
We claim that $X (V + f\otimes g)^*$ is a left inverse of $V + f\otimes g$, that is
\[
X (V + f\otimes g)^* (V + f\otimes g) = I.
\]
Indeed, the left hand side of the above simplifies to
\[
\begin{split}
X (V + f\otimes g)^* (V + f\otimes g) & = X (V^* + g \otimes f) (V + f\otimes g)
\\
& = X (I + g \otimes V^*f + V^*f \otimes g + \|f\|^2 g \otimes g)
\\
& = \Big(I + \frac{1}{c} \{\|g\|^2 V^*f \otimes V^*f +(\|V^*f\|^2-\|f\|^2)g \otimes g
\\
& \qquad - (R + R^*)\}\Big) \Big(I + g \otimes V^*f + V^*f \otimes g + \|f\|^2 g \otimes g\Big),
\end{split}
\]
and hence, there exists scalars $a_1, a_2, a_3$, and $a_4$ such that
\[
X (V + f\otimes g)^* (V + f\otimes g) = I + a_1 g \otimes g + a_2 V^*f \otimes g + a_3 g \otimes V^*f + a_4 V^*f \otimes V^*f.
\]
It is now enough to show that $a_1 = a_2 = a_3 = a_4 = 0$. Before getting to the proof of this claim, let us observe that
\[
R + R^* = (1+ \bar{\beta})V^*f \otimes g + (1+ \beta) g \otimes  V^*f,
\]
where $\beta := \la V^*f, g \ra$. Now we prove that $a_1 = 0$:
\[
\begin{split}
a_1 & = \text{coefficient of } g \otimes g
\\
& = \|f\|^2 + \frac{1}{c} \Big\{ - (1+ \beta) (\|V^*f\|^2 + \bar{\beta} \|f\|^2) + (\| V^*f\|^2- \|f\|^2) \Big((1+ \beta) + \|f\|^2\|g\|^2\Big)\Big\}
\\
& = \|f\|^2 + \frac{1}{c} \Big\{ - \bar{\beta} (1+ \beta) \|f\|^2 + \|V^*f\|^2 \|f\|^2\|g\|^2 - (1+ \beta) \|f\|^2 - \|f\|^4 \|g\|^2\Big\}
\\
& = \|f\|^2 + \frac{\|f\|^2}{c} \Big\{- \bar{\beta} (1+ \beta) + \|V^*f\|^2 \|g\|^2 - (1+ \beta) - \|f\|^2 \|g\|^2\Big\}
\\
& = \|f\|^2 + \frac{\|f\|^2}{c} (-c)
\\
& =0,
\end{split}
\]
where the last but one equality follows from \eqref{eqn: cVfg}. Next we compute $a_2$:
\[
\begin{split}
a_2 & = \text{coefficient of } V^*f \otimes g
\\
& = 1 + \frac{1}{c} \Big\{ \|g\|^2 (\|V^*f\|^2 + \bar{\beta} \|f\|^2) - (1+ \bar{\beta}) \Big((1 + \beta) + \|f\|^2\|g\|^2\Big)\Big\}
\\
& = 1 + \frac{1}{c} \Big\{\|V^*f\|^2 \|g\|^2 - |1 + \beta|^2 - \|f\|^2\|g\|^2 \Big\}
\\
& =0,
\end{split}
\]
as $\beta = \la V^*f, g \ra$. We turn now to compute $a_3$:
\[
\begin{split}
a_3 & = \text{coefficient of } g \otimes V^*f
\\
& = 1 + \frac{1}{c} \Big\{- (1 + \beta) \bar{\beta} - (1 + \beta) + (\| V^*f\|^2- \|f\|^2) \|g\|^2 \Big\}
\\
& = 1 + \frac{1}{c} \Big\{- |1 + \beta|^2 + (\| V^*f\|^2- \|f\|^2) \|g\|^2 \Big\}
\\
& =0,
\end{split}
\]
and, finally
\[
\begin{split}
a_4 & = \text{coefficient of } V^*f \otimes V^*f
\\
& = - \frac{1}{c} \Big\{\|g\|^2 (1 + \bar{\beta}) - (1 + \bar{\beta}) \|g\|^2 \Big\}
\\
& =0.
\end{split}
\]
This completes the proof of the fact that $V + f\otimes g$ is left-invertible with $X (V + f\otimes g)^*$ as a left inverse.
\end{proof}

\begin{Remark}\label{rem: Def of L}
From the definition of $X$ in \eqref{eqn: def X}, it is clear that if $V + f \otimes g$ is left-invertible for some isometry $V \in \clb(\clh)$ and vectors $f$ and $g$ in $\clh$, then
\[
L = \Big( I + \frac{1}{c} \{\|g\|^2 V^*f \otimes V^*f +(\|V^*f\|^2-\|f\|^2)g \otimes g - (R + R^*)\}\Big) \Big(V + f \otimes g\Big)^*,
\]
is a left-inverse of $V + f \otimes g$, where $c = c(V;f,g)$ and $R = (1 + \langle g, V^*f \rangle) V^*f \otimes g$.
\end{Remark}

It is worthwhile to observe that for an isometry $V \in \clb(\clh)$ and a vector $f \in \clh$, we have $\|V^* f\| = \|f\|$ if and only if $f \in \text{ran}V$. In particular, Theorem \ref{thm: Left inv} yields the following:

\begin{Corollary}
Let $V \in \clb(\clh)$ be an isometry and let $f$ and $g$ are nonzero vectors in $\clh$. If $f \notin \text{ran} V$, then $V + f \otimes g$ is left-invertible.
\end{Corollary}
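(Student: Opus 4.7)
The plan is to reduce the corollary directly to Theorem \ref{thm: Left inv} via the quantity $c(V;f,g)$ defined in \eqref{eqn: def of c}, exploiting the dichotomy $\|V^*f\| \leq \|f\|$ together with the equality case identified in the paragraph preceding the corollary.

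First I would justify the asserted characterization: for an isometry $V$, one has $\|V^*f\| = \|f\|$ if and only if $f \in \mathrm{ran}\, V$. The point is that $V^*V = I_{\clh}$ forces $VV^*$ to be the orthogonal projection onto $\mathrm{ran}\, V$, so
\[
\|V^*f\|^2 = \langle VV^*f, f\rangle = \|P_{\mathrm{ran}\, V}\, f\|^2,
\]
which equals $\|f\|^2$ precisely when $f \in \mathrm{ran}\, V$.

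Next, under the hypothesis $f \notin \mathrm{ran}\, V$, the above gives the strict inequality $\|V^*f\| < \|f\|$, hence $\|f\|^2 - \|V^*f\|^2 > 0$. Since $g$ is nonzero, the first summand $(\|f\|^2 - \|V^*f\|^2)\|g\|^2$ in the definition of $c(V;f,g)$ is strictly positive, while the second summand $|1 + \langle V^*f, g\rangle|^2$ is nonnegative. Therefore $c(V;f,g) > 0$, and in particular $c(V;f,g) \neq 0$. An application of Theorem \ref{thm: Left inv} then yields that $V + f \otimes g$ is left-invertible.

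There is no real obstacle here; the only subtle point is verifying the geometric fact $\|V^*f\| = \|f\| \iff f \in \mathrm{ran}\, V$, and the rest is a two-line positivity argument feeding into Theorem \ref{thm: Left inv}.
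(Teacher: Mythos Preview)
Your proposal is correct and follows exactly the approach of the paper: the authors record the observation that $\|V^*f\| = \|f\|$ if and only if $f \in \mathrm{ran}\,V$, and then invoke Theorem \ref{thm: Left inv}. Your justification of that observation via $VV^*$ being the orthogonal projection onto $\mathrm{ran}\,V$ and the ensuing positivity argument for $c(V;f,g)$ are precisely the intended details.
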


\newsection{Analytic operators}\label{sec: Analytic}

Recall that an isometry $V \in \clb(\clh)$ is called a \textit{pure isometry} if
\[
\bigcap_{n = 0}^\infty V^n \clh = \{0\}.
\]
As we will see soon, this is also known as the \textit{analytic property} of $V$. It is known that an isometry $V \in \clb(\clh)$ is pure if and only if $V$ is unitarily equivalent to $M_z$ on the $\clw$-valued Hardy space $H^2_{\clw}(\D)$, where $\clw = \ker V^*$ is the \textit{wandering subspace} corresponding to $V$. Here $M_z$ denotes the multiplication operator by the coordinate function $z$ on $H^2_{\clw}(\D)$ (see \eqref{eqn: Mz} below). Rank-one perturbations of isometries (or pure isometries) that are pure isometries form a rich class of operators and are fairly complex in nature \cite{Nakamura 1}. The methods involve heavy machinery of $H^\infty(\D)$-function theory, which is mostly unavailable for general function spaces (see \cite{BT1, Choi, Fuhrmann, Ionascu, Serban Turcu}). In this section we discuss some examples of rank-one perturbations of isometries that are shift or simply analytic.

We begin with a brief introduction to shift operators on reproducing kernel Hilbert spaces. Let $\cle$ be a Hilbert space and $\Omega$ be a domain in $\C$. Let $\clh$ be a Hilbert space of $\cle$-valued analytic functions on $\Omega$. Suppose the \textit{evaluation map}
\[
ev_w (f) = f(w) \quad \quad (f \in \clh),
\]
defines a bounded linear operator $ev_w : \clh \raro \cle$ for all $w \in \Omega$. Then the \textit{kernel} function $k : \Omega \times \Omega \raro \clb(\cle)$ defined by
\[
k(z,w) = ev_z \circ ev_w^* \qquad (z, w \in \Omega),
\]
is \textit{positive definite}, that is,
\[
\sum_{i,j=1}^n \langle k(z_i, z_j) \eta_j, \eta_i \rangle_{\cle} \geq 0,
\]
for all $\{z_i\}_{i=1}^n \subseteq \Omega$, $\{\eta_i\}_{i=1}^n \subseteq \cle$ and $n \geq 1$. Moreover, $k$ is analytic in the first variable and satisfies the \textit{reproducing property}
\[
\langle ev_w(f), \eta \rangle_{\cle} = \langle f(w), \eta \rangle_{\cle} = \langle f, k(\cdot, w)\eta\rangle_{\clh},
\]
for all $f \in \clh$, $w \in \Omega$ and $\eta \in \cle$. We denote the space $\clh$ by $\clh_k$ and call it \textit{analytic Hilbert space}. The \textit{shift operator} $M_z$ on $\clh_k$ is defined by
\begin{equation}\label{eqn: Mz}
(M_z f)(w) = w f(w) \quad \quad (f \in \clh_k, w \in \Omega).
\end{equation}
We always assume that $M_z$ is a bounded linear operator on $\clh_k$ (equivalently, $z \clh_k \subseteq \clh_k$). It is easy to see that if $M_z$ is a shift on some $\clh_k$, then
\[
\bigcap_{n = 0}^\infty M_z^n \clh_k = \bigcap_{n = 0}^\infty z^n \clh_k = \{0\}.
\]
This is the property which bridges the gap between left-invertible operators and left-invertible shifts. More precisely, following the ideas of Shimorin \cite{Shimorin}, a bounded linear operator $T$ on $\clh$ is called \textit{analytic} if
\[
\bigcap_{n = 0}^\infty T^n \clh = \{0\}.
\]
If $T \in \clb(\clh)$ is a left-invertible analytic operator, then there exists an analytic Hilbert space $\clh_k$ such that $T$ and the shift $M_z$ on $\clh_k$ are unitarily equivalent \cite{Shimorin}. Therefore, up to unitary equivalence, analytic left-invertible operators are nothing but left-invertible shifts.

The following proposition collects some examples of analytic and shift operators.

\begin{Proposition}\label{Aly 2}
Let $V \in \clb(\clh)$ be a pure isometry, $m, n \in \mathbb{Z}_+$, and let $f_0 \in \ker V^*$. If $S = V+ V^m f_0 \otimes V^n f_0$, then the following holds:
\begin{enumerate}
\item $S$ is analytic whenever $m > n$.
\item $S$ is a shift whenever $m > n+1$.
\end{enumerate}
\end{Proposition}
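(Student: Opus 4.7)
The strategy is to work in the Wold/von Neumann model of the pure isometry $V$, identifying $(V,\clh)$ with the shift $(M_z, H^2_{\clw}(\D))$ on the vector-valued Hardy space, where $\clw = \ker V^*$. Expanding a typical $h$ as $h = \sum_{j\ge 0} z^j h_j$ with $h_j\in\clw$ and using $V^k f_0 \leftrightarrow z^k f_0$, a direct computation produces the coefficient description $(Sh)_0 = 0$, $(Sh)_j = h_{j-1}$ for $j\ge 1$ with $j\ne m$, and $(Sh)_m = h_{m-1} + \langle h_n, f_0\rangle_{\clw} f_0$. Note that $(Sh)_0 = 0$ is legitimate because $m\ge 1$, which follows from the standing assumption $m>n\ge 0$.

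For part (1), I would prove by induction on $k$ the statement: for every $h\in\clh$, $(S^k h)_j = 0$ whenever $j<k$. The base case $k=1$ is simply $(Sh)_0 = 0$. For the inductive step, set $g = S^{k-1}h$, so that $g_j = 0$ for all $j<k-1$ by the inductive hypothesis. For any index $j<k$ with $j\ne m$, the formula gives $(Sg)_j = g_{j-1} = 0$ automatically. The only delicate index is $j=m$ in the range $m<k$, where $(Sg)_m = g_{m-1} + \langle g_n, f_0\rangle f_0$; the first summand vanishes because $m-1<k-1$, and the hypothesis $m>n$ forces $n\le m-1\le k-2<k-1$, so the second summand also vanishes. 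This induction yields $S^k\clh \subseteq z^k H^2_{\clw} = V^k\clh$, and purity of $V$ immediately gives $\bigcap_{k\ge 0} S^k\clh \subseteq \bigcap_{k\ge 0} V^k\clh = \{0\}$.

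For part (2), I would combine (1) with the Shimorin correspondence recalled in this section: every left-invertible analytic operator is unitarily equivalent to $M_z$ on some analytic Hilbert space. Analyticity is already (1), so it suffices to verify left-invertibility through Theorem \ref{thm: Left inv}. With $f = V^m f_0$ and $g = V^n f_0$, the isometry property together with $m\ge 1$ gives $\|V^*f\| = \|f\| = \|f_0\|$, and therefore $c(V;f,g) = |1 + \langle V^{m-1}f_0, V^n f_0\rangle|^2$. The stronger assumption $m>n+1$ makes $m-1-n\ge 1$, so $V^{m-1-n}f_0\in \text{ran}(V)$ is orthogonal to $f_0\in\ker V^*$; hence $\langle V^*f, g\rangle = 0$ and $c(V;f,g) = 1 > 0$. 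Theorem \ref{thm: Left inv} then delivers left-invertibility, and Shimorin's theorem upgrades $S$ to a shift.

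The main obstacle is the single-index bookkeeping at $j=m$ in the induction of part (1): this is the only place where the hypothesis $m>n$ is used in an essential way, and it is precisely what makes $n$ lag one step behind the inductive cutoff $k-1$. Everything else is routine once one has the explicit coefficient formula for $S$ coming from the Wold model, together with the inner-product identity $\langle V^*f, g\rangle = 0$ that drops out of $m>n+1$ for the left-invertibility input into Theorem \ref{thm: Left inv}.
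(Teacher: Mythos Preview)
Your proof is correct, and for part (2) it coincides with the paper's argument (compute $c(V;f,g)=1$ via $\langle V^{m-1}f_0, V^n f_0\rangle = 0$ when $m>n+1$, then invoke Theorem~\ref{thm: Left inv} together with analyticity and Shimorin's model). For part (1), however, your route is genuinely different from the paper's. The paper works at the operator level: writing $f_t = V^t f_0$ (with $f_t = 0$ for $t<0$), it proves by induction the closed formula
\[
S^{k+1} = V^{k+1} + \sum_{j=0}^{k} f_{m+j}\otimes f_{n-k+j},
\]
and then, for $k \geq n+1$, factors the surviving terms through $V^{k+1}$ to conclude $S^{k+1}\clh \subseteq V^{k+1}\clh$. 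You instead work at the coefficient level in the Wold model and run the induction directly on the vanishing of $(S^k h)_j$ for $j<k$, which is more elementary (no closed-form operator identity needed) and actually yields the containment $S^k\clh \subseteq V^k\clh$ for \emph{every} $k\geq 0$, not only for $k>n+1$. The paper's operator formula, on the other hand, gives an explicit structural description of all powers of $S$, which is informative in its own right even though more is computed than is strictly needed for analyticity.
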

\begin{proof}
For simplicity, for each $t \in \mathbb{Z}$, we set
\[
f_t = \begin{cases}
V^t f_0  & \mbox{if } t \geq 0
\\
V^{*-t} f_0 & \mbox{if } t < 0.
\end{cases}
\]
Since $f_0 \in \ker V^*$, it follows that $f_t = 0$ for all $t <0$. Suppose $m >n$. Observe that $\la f_m , f_n\ra  = \la V^m f_0 , V^n f_0\ra = 0$, and hence
\[
\begin{split}
S^2 & = V^2 + f_{m+1} \otimes f_n + f_m \otimes f_{n-1} + \la f_m , f_n\ra f_m \otimes f_n
\\
&=V^2 + f_m \otimes f_{n-1} + f_{m+1} \otimes f_n.
\end{split}
\]
Then, by induction, we have
\[
S^{k+1} = V^{k+1}+ f_m \otimes f_{n-k} + f_{m+1} \otimes f_{n-k+1} + \cdots + f_{m+k-1} \otimes f_{n-1} + f_{m+k} \otimes f_n,
\]
that is
\begin{equation}\label{eqn: 3.2}
S^{k+1} = V^{k+1} + \sum_{j=0}^{k} f_{m+j} \otimes f_{n-k+j},
\end{equation}
for all $k \geq 1$. In particular, if $k=n+j$ and $j \geq 1$, then it follows that
\[
S^{n+j+1} = V^{n+j+1} + f_{m} \otimes f_{-j} + f_{m+1} \otimes f_{-j+1} + \cdots + f_{m+ n+ j-1} \otimes f_{n-1} + f_{m+n+j} \otimes f_n.
\]
At this point, we note that $f_{-p} = 0$ for all $p >0$, and hence
\[
\begin{split}
S^{n+j+1} & = V^{n+j+1} + f_{m+j} \otimes f_0 + f_{m+j+1} \otimes f_1 + \cdots + f_{m+j+n-1} \otimes f_{n-1} + f_{m+n+j} \otimes f_n
\\
& = V^{n+j+1}(I + \sum_{i=0}^{n} f_{m-n-1+i} \otimes f_i),
\end{split}
\]
as $m >n$. This implies that
\[
S^{n+j+1}\clh \subseteq V^{n+j+1} \clh \qquad (j \geq 1).
\]
From here we see that
\[
\bigcap_{r \geq 0} S^r \clh \subseteq
\bigcap_{r \geq {n+1}} S^r \clh \subseteq
\bigcap_{r \geq {n+1}} V^r\clh = \{0\},
\]
where the last equality follows from the fact that $V$ is pure. To prove (2), we compute the value of $c(V; f,g)$ with $f = V^m f_0$ and $g = V^n f_0$:
\[
\begin{split}
c(V; f,g) & = (\|f\|^2 - \|V^*f\|^2) \|g\|^2 + |1 + \la V^*f , g\ra|^2
\\
& = (\|V^m f_0\|^2 - \|V^{m-1} f_0\|^2) \|V^n f_0\|^2 + |1 + \la V^{m-1}f_0 , V^n f_0\ra|^2
\\
& = 0 \times \|f_0\|^2 + |1 + 0|
\\
& = 1,
\end{split}
\]
where the last but one equality follows because $m-n-1  >0$ implies $\langle V^{*n} V^{m-1} f_0, f_0 \rangle =0$. The first part and Theorem \ref{thm: Left inv} then completes the proof of part (2). 	
\end{proof}

The above observation is fairly elementary. The general classification of rank-one perturbations of isometries (or pure isometries) that are shift on some reproducing kernel Hilbert space is an open problem. However, see \cite[Theorem 1]{Nakamura} and \cite{Nakamura 1} in the context of classifications of rank-one perturbations of isometries that are pure isometry.

The following is also a simple class of examples of analytic operators.

\begin{Proposition}\label{thm: Aly 1}
Let $V \in \clb(\clh)$ be a pure isometry, $f$ and $g$ be vectors in $\clh$, and suppose $V^*g + \la g,f\ra g =0$. Then $V + f \otimes g$ is analytic.
\end{Proposition}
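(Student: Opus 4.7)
The plan is to establish the closed-form identity
\[
S^n = V^n + (V^{n-1} f) \otimes g \qquad (n \geq 1),
\]
where $S = V + f \otimes g$, and then to extract the analyticity of $S$ directly from the purity of $V$. The hypothesis $V^* g + \la g, f\ra g = 0$ says that $g$ is an eigenvector of $V^*$ with eigenvalue $\lambda := -\la g, f\ra$, so iteratively $V^{*k} g = \lambda^k g$ for all $k \geq 0$. Crucially, $\bar{\lambda} = -\la f, g\ra$, and this one identity is what makes all the stray rank-one terms in the induction collapse.

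I would prove the identity by induction on $n$, using the rank-one arithmetic tabulated at the start of Section~2. The base case $n = 1$ is the definition of $S$. For the inductive step, write
\[
S^{k+1} = (V + f \otimes g)\bigl(V^k + (V^{k-1} f) \otimes g\bigr),
\]
and expand. Three of the four products give $V^{k+1}$, $(V^k f) \otimes g$, and the cross terms
\[
(f \otimes g) V^k + (f \otimes g)\bigl((V^{k-1} f) \otimes g\bigr) = f \otimes V^{*k} g + \la V^{k-1} f, g\ra \, f \otimes g.
\]
Using $V^{*k} g = \lambda^k g$ together with $\la V^{k-1} f, g \ra = \la f, V^{*(k-1)} g\ra = \bar{\lambda}^{k-1} \la f, g\ra$, these cross terms combine to $\bar{\lambda}^{k-1}\bigl(\bar{\lambda} + \la f, g\ra\bigr) f \otimes g$, which vanishes since $\bar{\lambda} = -\la f, g\ra$. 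Hence the induction closes and the claimed formula for $S^n$ holds.

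With the formula in hand, analyticity is immediate: for any $h \in \clh$,
\[
S^n h = V^n h + \la h, g\ra V^{n-1} f = V^{n-1}\bigl(V h + \la h, g\ra f\bigr) \in V^{n-1} \clh,
\]
so $S^n \clh \subseteq V^{n-1} \clh$ for every $n \geq 1$. Intersecting over $n$ and invoking the purity of $V$ yields
\[
\bigcap_{n=0}^{\infty} S^n \clh \subseteq \bigcap_{n=0}^{\infty} V^n \clh = \{0\}.
\]
The only delicate point in the argument is the cancellation in the inductive step, which is entirely driven by the spectral identity $V^* g = \lambda g$ with $\bar{\lambda} = -\la f, g\ra$; once that is isolated, the rest is bookkeeping with the rank-one operator rules.
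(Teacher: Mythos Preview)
Your proof is correct and arrives at exactly the same closed form $S^n = V^n + (V^{n-1}f)\otimes g$ that the paper uses, followed by the same containment $S^n\clh \subseteq V^{n-1}\clh$ and the purity conclusion. The paper's execution of the inductive step is more economical, however: rather than recasting the hypothesis as an eigenvector condition and tracking powers of $\lambda$, it simply expands $S^2 = V^2 + Vf\otimes g + f\otimes(V^*g + \la g,f\ra g) = VS$ directly (the hypothesis kills the last term in one stroke), whence $S^{n+1} = V^nS$ by a trivial induction.
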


\begin{proof}
If we set $S: = V + f \otimes g$, then
\[
S^2 = V^2+ Vf \otimes g + f \otimes (V^*g + \la g, f \ra g) = VS.
\]
Therefore
\[
S^{n+1} = V^n S \qquad (n \geq 1),
\]
can be proved analogously by induction. In particular
\[
S^{n+1} \clh = V^n S \clh \subseteq V^n \clh \qquad (n \geq 0),
\]
and hence, by using the fact that $V$ is a pure isometry, it follows that
\[
\bigcap_{n = 0}^\infty (V+ f \otimes g)^{n+1}\clh\subseteq \bigcap_{n = 0}^\infty V^n\clh =\{0\},
\]
that is, $V + f \otimes g$ is analytic.
\end{proof}

Note that $V^*g + \la g,f\ra g =0$  is equivalent to the condition that $g \in \ker (V + f \otimes g)^*$.

Recall that the scalar-valued Hardy space $H^2(\D)$ is a reproducing kernel Hilbert space corresponding to the Szeg\"{o} kernel $\bS: \D \times \D \raro \mathbb{C}$, where
\[
\bS(z,w) = (1 - z \bar{w})^{-1} \qquad (z, w \in \D).
\]
For each $w \in \D$, consider the analytic function $\bS(\cdot, w) : \D \raro \mathbb{C}$ defined by (also known as the kernel function, see the discussion at the beginning of this section)
\[
(\bS (\cdot, w))(z) = \bS(z,w) \qquad (z \in \D).
\]

\begin{Example}
The following examples illustrate some direct application of the above propositions.

\begin{enumerate}
\item Fix $w \in \D$, and set $g = \bS(\cdot, w)$. We know that $M_z^* \bS(\cdot, w) = \bar{w} \bS(\cdot, w)$. Choose $f \in H^2(\D)$ such that $\langle g, f \rangle_{H^2(\D)} = - \bar{w}$ (for instance, $f = \frac{-1}{\bar{w}^{n-1}} z^n$ for some $n \geq 1$). Evidently $M_z^*g + \la g,f\ra g =0$, and hence, $M_z + f \otimes \bS(\cdot, w)$ is an analytic operator.
\item Consider $f = z$ and $g = 1$ in $H^2(\D)$. Then $c(M_z; f, g) = 2 \neq 0$, and hence $M_z + f \otimes g$ is a shift.
\item Consider $f = z$ and $g = -1$ in $H^2(\D)$. Then $c(M_z; f, g) = 0$, and hence $M_z + f \otimes g$ not left-invertible, but analytic by Proposition \ref{Aly 2}.
\end{enumerate}
\end{Example}

Note that the rank-one perturbation $M_z + z^2 \otimes z$ is similar to $M_z$ on $H^2(\D)$. Here the similarity follows easily from the fact that $M_z + z^2 \otimes z$ is a weighted shift with the weight sequence $\{1, 2, 1, 1, \ldots\}$. This implies, of course, that $M_z + z^2 \otimes z$ is analytic, where on the one hand
\[
M_z^* z + \la z, z^2\ra z =1 \neq 0.
\]
Therefore, $M_z + z^2 \otimes z$ is an example of an analytic rank-one perturbation of $M_z$ which does not satisfy the hypothesis of Proposition \ref{thm: Aly 1}.

\newsection{Diagonal operators}\label{sec: diaonal}

In this section, we examine rank-one perturbations of diagonal operators. We prove that all the interesting left-invertible rank-one perturbations of diagonal operators are invertible.

Throughout this section, we fix a Hilbert space $\clh$ with orthonormal basis $\{e_n\}_{n=0}^\infty$ of $\clh$. We also fix vectors $f = \sum_{n=0}^{\infty} a_n e_n$ and $g = \sum_{n=0}^{\infty}b_n e_n$ in $\clh$ and diagonal operator $D \in \clb(\clh)$ with diagonal entries $\{\alpha_n\}_{n \geq 0}$. Also, we set
\[
T = D + f \otimes g.
\]
We will assume throughout this section that
\[
\alpha_n, a_n, b_n \neq 0 \qquad (n \geq 0),
\]
as this is the class of perturbations we all are most interested in (cf. \cite{Ionascu}). Furthermore, we define the scalar
\[
r := 1 + \sum_{n=0}^{\infty} \frac{a_n \bar{b}_n}{\alpha_n}.
\]
The following result is from Ionascu \cite[Proposition 2.4]{Ionascu}:

\begin{Proposition}\label{thm: kernel}
$T$ admits zero as an eigenvalue if and only if $r = 0$ and $\{\frac{a_n}{\alpha_n}\}_{n\geq 0}$ is a square summable sequence.
\end{Proposition}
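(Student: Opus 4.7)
The plan is to work out the eigenvalue equation $T h = 0$ coordinate-by-coordinate in the basis $\{e_n\}_{n \geq 0}$ and then read off the two asserted conditions. Write $h = \sum_{n \geq 0} c_n e_n \in \clh$, and set $\lambda := \langle h, g\rangle$. Since $Dh = \sum_n \alpha_n c_n e_n$ and $(f \otimes g)h = \lambda f = \sum_n \lambda a_n e_n$, the equation $Th = 0$ is equivalent to
\[
\alpha_n c_n + \lambda a_n = 0 \qquad (n \geq 0),
\]
which, using $\alpha_n \neq 0$, forces
\[
c_n = -\lambda \frac{a_n}{\alpha_n} \qquad (n \geq 0).
\]

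First I would handle the forward direction. Suppose $h \neq 0$ satisfies $Th = 0$. If $\lambda = 0$, then the displayed formula gives $c_n = 0$ for every $n$, contradicting $h \neq 0$; hence $\lambda \neq 0$. Squaring and summing, $\|h\|^2 = |\lambda|^2 \sum_n |a_n/\alpha_n|^2 < \infty$, so $\{a_n/\alpha_n\}_{n \geq 0} \in \ell^2$. Substituting $c_n = -\lambda a_n/\alpha_n$ back into the definition $\lambda = \langle h, g\rangle = \sum_n c_n \overline{b_n}$ yields
\[
\lambda = -\lambda \sum_{n \geq 0} \frac{a_n \overline{b_n}}{\alpha_n},
\]
i.e.\ $\lambda \, r = 0$, and since $\lambda \neq 0$ we conclude $r = 0$.

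For the converse, assume $r = 0$ and $\{a_n/\alpha_n\} \in \ell^2$. Define
\[
h := -\sum_{n \geq 0} \frac{a_n}{\alpha_n} e_n \in \clh,
\]
which is nonzero since all $a_n, \alpha_n$ are nonzero. Then $\langle h, g\rangle = -\sum_n (a_n/\alpha_n) \overline{b_n} = r - 1 = -1$, so taking $\lambda := -1$ we have $c_n = -\lambda a_n/\alpha_n$, which by the computation above gives $Th = 0$. Thus $0$ is an eigenvalue with eigenvector $h$.

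There is essentially no serious obstacle; the argument is a direct unfolding of the eigenvalue equation. The only subtle point is to notice that the scalar $\lambda = \langle h, g\rangle$ must be nonzero on any eigenvector (so that the hypothesis $a_n \neq 0$ can be leveraged), which is precisely what drives the two conditions: $\ell^2$-summability of $\{a_n/\alpha_n\}$ comes from requiring $h \in \clh$, and $r = 0$ comes from the self-consistency condition $\lambda = \langle h, g\rangle$.
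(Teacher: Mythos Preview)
Your argument is correct in outline and is the natural one; note, however, that the paper does not supply its own proof of this proposition but simply quotes it from Ionascu \cite[Proposition 2.4]{Ionascu}, so there is nothing in the paper to compare against.

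One small slip in your converse: with $h = -\sum_n (a_n/\alpha_n) e_n$ you have
\[
\langle h, g\rangle = -\sum_{n\geq 0}\frac{a_n \overline{b_n}}{\alpha_n} = -(r-1) = 1 - r,
\]
which equals $+1$ when $r=0$, not $-1$. This does not break the argument, since $Dh = -f$ and hence $Th = Dh + \langle h,g\rangle f = -f + f = 0$ regardless; but your line ``taking $\lambda := -1$ we have $c_n = -\lambda a_n/\alpha_n$'' is internally inconsistent (with $\lambda=-1$ that would give $c_n = a_n/\alpha_n$, the wrong sign). Just correct $\lambda$ to $+1$ and the verification goes through cleanly.
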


The key to our analysis lies in the following observation which is also a result of independent interest.

\begin{Proposition}\label{thm: range}
$\{e_n\}_{n \geq 0} \subseteq \text{ran} T$ if and only if $r \neq 0$ and $\{\frac{a_n}{\alpha_n}\}_{n\geq 0}$ is a square summable sequence.
\end{Proposition}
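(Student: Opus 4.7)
The plan is to solve $Th = e_n$ explicitly for each fixed $n \ge 0$ and reduce the question to a single scalar self-consistency identity. Writing $Th = Dh + \langle h, g\rangle f$, setting $c := \langle h, g\rangle$, and using the fact that $D$ is diagonal with nonzero entries, the equation $Th = e_n$ becomes $Dh = e_n - c f$, which determines the Fourier coefficients of $h$ coordinatewise:
\[
h_k \;=\; \frac{\delta_{kn}}{\alpha_n} - c\,\frac{a_k}{\alpha_k} \qquad (k \ge 0).
\]
Square summability of $\{h_k\}_k$ (that is, $h \in \clh$) is unaffected by the single correction at $k=n$, so whenever $c \neq 0$ it is equivalent to $\{a_k/\alpha_k\}_k \in \ell^2$.

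Next I would impose the self-consistency $c = \langle h, g\rangle$ by pairing the explicit formula for $h$ with $g = \sum_k b_k e_k$. A direct computation gives
\[
c \;=\; \frac{\bar b_n}{\alpha_n} \;-\; c\sum_{k=0}^{\infty}\frac{a_k \bar b_k}{\alpha_k} \;=\; \frac{\bar b_n}{\alpha_n} \;-\; c\,(r-1),
\]
which rearranges to the decisive scalar identity $c\,r = \bar b_n/\alpha_n$.

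For the forward direction, fix any $n$ and pick $h \in \clh$ with $Th = e_n$. The value $c = 0$ is excluded: it would force $h = e_n/\alpha_n$ and then $\langle h, g\rangle = \bar b_n/\alpha_n \neq 0$, contradicting $c = 0$. So $c \neq 0$, which in turn forces $\{a_k/\alpha_k\}_k \in \ell^2$; together with $g \in \clh$ and Cauchy--Schwarz this makes the series defining $r$ convergent, so $r$ is a bona fide scalar. The scalar identity $c\,r = \bar b_n/\alpha_n \neq 0$ then forces $r \neq 0$. For the converse, assume $r \neq 0$ and $\{a_k/\alpha_k\}_k \in \ell^2$; for each $n$ set $c_n := \bar b_n/(\alpha_n r)$ and define $h^{(n)}$ by the explicit formula $h^{(n)}_k = \delta_{kn}/\alpha_n - c_n\,a_k/\alpha_k$. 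Square summability ensures $h^{(n)} \in \clh$, the identity $c_n\,r = \bar b_n/\alpha_n$ ensures $\langle h^{(n)}, g\rangle = c_n$, and by construction $D h^{(n)} + c_n f = e_n$, so $T h^{(n)} = e_n$.

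The only subtlety is bookkeeping: separating the distinguished index $k=n$ from the bulk and noting that square summability is insensitive to a single term, plus a brief invocation of Cauchy--Schwarz to ensure that $r$ is well defined in the forward direction. No deeper machinery is needed; structurally the argument parallels the eigenvalue computation of Proposition \ref{thm: kernel}, with the right-hand side $0$ there replaced here by $e_n$, which is exactly what flips the conclusion from $r = 0$ to $r \neq 0$.
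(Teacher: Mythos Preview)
Your argument is correct and follows essentially the same route as the paper: both solve $Th=e_n$ coordinatewise via $Dh=e_n-cf$ with $c=\langle h,g\rangle$, rule out $c=0$ using $b_n\neq 0$, derive square summability of $\{a_k/\alpha_k\}$ from $h\in\clh$, and obtain the scalar identity $cr=\bar b_n/\alpha_n$; for the converse both write down the explicit preimage $h^{(n)}=\tfrac{1}{\alpha_n}e_n-\tfrac{\bar b_n}{r\alpha_n}\sum_k\tfrac{a_k}{\alpha_k}e_k$. Your explicit Cauchy--Schwarz check that $r$ is a well-defined scalar once $\{a_k/\alpha_k\}\in\ell^2$ is a welcome clarification that the paper leaves implicit.
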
	
\begin{proof}
Assume that $e_j \in \text{ran} T$ for some arbitrary but fixed integer $j \geq 0$. Then there exists $x = \sum_{n = 0}^\infty c_n e_n \in \clh$ such that $T x = (D + f \otimes g)x = e_j$. Therefore
\begin{equation}\label{eqn: ei}
e_j = \sum_{n = 0}^\infty (c_n \alpha_n) e_n + \la x, g \ra \sum_{n = 0}^\infty a_n e_n.
\end{equation}
Note that $\la x, g \ra \neq 0$. Indeed, if $\la x, g \ra =0$, then
\[
c_n = \begin{cases}
\frac{1}{\alpha_j} & \mbox{if } n=j \\
0 & \mbox{otherwise},
\end{cases}
\]
and hence $x =\frac{1}{\alpha_j}e_j$. Since $g = \sum_{n=0}^{\infty} b_n e_n$, using $\la x, g \ra=0$, we have $b_j = 0$. This contradiction shows,
as promised, that $\la x, g \ra \neq 0$. Now equating the coefficients of terms on either side of \eqref{eqn: ei}, we have
\[
c_n = \begin{cases}
\frac{1}{\alpha_j} (1 -  a_j \la x, g \ra) & \mbox{if } n=j \\
-\frac{a_n}{\alpha_n} \la x, g \ra & \mbox{otherwise}.
\end{cases}
\]
In particular, $\{\frac{a_n}{\alpha_n}\}_{n\geq 0}$ is a square summable sequence, and, as $\la x, g \ra = \sum_{n =0}^\infty c_n \bar{b}_n$, we have
\[
\la x, g\ra = -\la x, g\ra \sum_{n = 0}^\infty \frac{a_n\bar{b}_n}{\alpha_n} + \frac{\bar{b}_j}{\alpha_j},
\]
which implies
\[
\la x, g \ra \Big(1+\sum_{n = 0}^\infty\frac{a_n\bar{b}_n}{\alpha_n}\Big) = \la x, g\ra r = \frac{\bar{b}_j}{\alpha_j},
\]
and hence $r \neq 0$.

Conversely, assume that $r \neq 0$ and $\{\frac{a_n}{\alpha_n}\}_{n\geq 0}$ is a square summable sequence. Fix an integer $j \geq 0$. Then
\[
y = - \frac{\bar{b}_j}{r\alpha_j} \Big(\sum_{n = 0}^\infty \frac{a_n}{\alpha_n} e_n \Big)+ \frac{1}{\alpha_j}e_j,
\]
is a vector in $\clh$. Note that
\[
\langle y, g \rangle = - \frac{\bar{b}_j}{r\alpha_j} (r - 1) + \frac{\bar{b}_j}{\alpha_j} = \frac{\bar{b}_j}{r\alpha_j}.
\]
Using the representation $f = \sum_{n = 0}^\infty a_n e_n$, we deduce from the above that
\[
\begin{split}
Ty = (D + f \otimes g)y = - \frac{\bar{b}_j}{r\alpha_j} \sum_{n = 0}^\infty a_ne_n +e_j +\la y, g\ra f = e_j.
\end{split}
\]
This implies that $e_j \in \text{ran}T$ for all $j \geq 0$ and completes the proof of the proposition.
\end{proof}

We also need the following lemma:
	
\begin{Lemma}\label{thm: bounded below}
If $T$ is bounded below, then $D$ is invertible.
\end{Lemma}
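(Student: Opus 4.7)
The plan is to prove the contrapositive: assuming $D$ is not invertible, I will construct a sequence of unit vectors on which $T$ is arbitrarily small in norm, contradicting the assumption that $T$ is bounded below.

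First I would record the following standing facts: since $D \in \clb(\clh)$, the diagonal sequence $\{\alpha_n\}_{n \geq 0}$ is bounded; and since $g = \sum_{n=0}^{\infty} b_n e_n \in \clh$, its Fourier coefficients satisfy $b_n \to 0$ as $n \to \infty$. Moreover, because the diagonal entries $\alpha_n$ are all nonzero, the diagonal operator $D$ is invertible if and only if $\inf_{n \geq 0} |\alpha_n| > 0$. Hence, if $D$ is \emph{not} invertible, there must exist a subsequence $\{\alpha_{n_k}\}_{k \geq 0}$ with $\alpha_{n_k} \to 0$.

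Next, I would take the unit vectors $x_k = e_{n_k}$ and compute
\[
T x_k = (D + f \otimes g) e_{n_k} = \alpha_{n_k} e_{n_k} + \la e_{n_k}, g\ra f = \alpha_{n_k} e_{n_k} + \bar{b}_{n_k} f.
\]
Applying the triangle inequality,
\[
\|T x_k\| \leq |\alpha_{n_k}| + |b_{n_k}|\,\|f\|.
\]
Since $\alpha_{n_k} \to 0$ and $b_{n_k} \to 0$ (as a subsequence of a null sequence of Fourier coefficients), the right-hand side tends to $0$. Hence $\|T x_k\| \to 0$ with $\|x_k\| = 1$, contradicting the hypothesis that $T$ is bounded below. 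Therefore $D$ must be invertible.

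There is no real obstacle here; the argument is a short spectral/Fourier computation. The only point requiring attention is justifying that, under the blanket assumption $\alpha_n \neq 0$ for all $n$, failure of invertibility of the bounded diagonal operator $D$ is equivalent to the existence of a null subsequence of the diagonal entries — this is exactly the observation that the spectrum of a bounded diagonal operator is the closure of its diagonal.
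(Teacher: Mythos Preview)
Your proof is correct and follows essentially the same route as the paper's: both argue by contradiction, pick a null subsequence $\alpha_{n_k}\to 0$ (possible since all $\alpha_n\neq 0$), evaluate $T$ on the unit vectors $e_{n_k}$, and use the triangle inequality together with $b_{n_k}\to 0$ to contradict boundedness below. Your write-up is in fact slightly more careful than the paper's in explicitly noting why $b_{n_k}\to 0$ and in writing $\la e_{n_k},g\ra=\bar b_{n_k}$ (the paper drops the conjugate, though of course the norm estimate is unaffected).
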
	
	
\begin{proof}
Assume by contradiction that $\{\alpha_{n_k}\}$ is a subsequence of the sequence $\{\alpha_n\}$, which converges to zero. Now
\[
T e_{n_k} = (D+ f\otimes g)e_{n_k} = \alpha_{n_k}e_{n_k}+\la e_{n_k},g\ra f =\alpha_{n_k}e_{n_k}+b_{n_k}f,
\]
implies
\[
\|T e_{n_k}\|\leq |\alpha_{n_k}|+|b_{n_k}|\|f\|.
\]
This shows that $\{T e_{n_k}\}$ converges to zero for the sequence of unit vectors $\{e_{n_k}\}$. But this contradicts the fact that $T$ is bounded below. Therefore the sequence $\{\alpha_n\}$ has no subsequence that converges to zero. Consequently, there exists $M > 0$ such that
\[
|\alpha_n|> M \qquad (n \geq 0),
\]
and hence $\{\frac{1}{\alpha_n}\}$ is a bounded sequence. We conclude that $D$ is invertible.
\end{proof}

The converse is not true. For example, choose $ f, g \in \clh$ such that $\la f,g\ra = -1$. Then, by Proposition \ref{thm: kernel}, $I +f\otimes g$ is not injective, and hence $I +f\otimes g$ is not invertible. However, under the assumption that $D+f\otimes g$ is injective, we have the following:

\begin{Proposition}\label{Prop: bdd below}
If $D$ is bounded below and $T$ is injective, then $T$ is left-invertible.
\end{Proposition}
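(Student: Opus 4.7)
The plan is to reduce $T$ to a rank-one perturbation of the identity by factoring out $D$, and then invoke Proposition \ref{thm: kernel} together with the classical fact that $I + u \otimes v$ is invertible iff $1 + \langle u, v \rangle \neq 0$. I expect the conclusion to be much stronger than advertised: under these hypotheses, $T$ is actually invertible, not merely left-invertible.

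First I would observe that the hypothesis that $D$ is bounded below forces $\inf_n |\alpha_n| > 0$, so $D$ is invertible and $D^{-1}$ is the bounded diagonal operator with entries $1/\alpha_n$. In particular $D^{-1}f \in \clh$, which is the statement that the sequence $\{a_n/\alpha_n\}_{n \geq 0}$ is square summable. This is the crucial observation: the bounded-below hypothesis on $D$ automatically places us in the ``good case'' of Proposition \ref{thm: kernel} and Proposition \ref{thm: range}.

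Next I would factor
\[
T = D + f \otimes g = D\bigl(I + (D^{-1}f) \otimes g\bigr).
\]
Since $D$ is invertible, $T$ is injective if and only if $I + (D^{-1}f) \otimes g$ is injective. But for any vectors $u, v \in \clh$, the rank-one perturbation $I + u \otimes v$ fails to be injective precisely when $1 + \langle u, v \rangle = 0$ (an element of the kernel must be a scalar multiple of $u$, and the eigenvalue equation pins down the scalar). Applied with $u = D^{-1}f$ and $v = g$, this gives
\[
\langle D^{-1}f, g \rangle = \sum_{n=0}^\infty \frac{a_n \bar{b}_n}{\alpha_n} = r - 1,
\]
so $I + (D^{-1}f) \otimes g$ is injective iff $r \neq 0$. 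Combining with the square-summability established in the previous step, the injectivity of $T$ forces $r \neq 0$ (which is also consistent with Proposition \ref{thm: kernel}).

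Finally, since $I + (D^{-1}f) \otimes g$ is a finite-rank (hence compact) perturbation of the identity on an infinite-dimensional Hilbert space, Fredholm theory guarantees that injectivity is equivalent to invertibility; alternatively, one can write down the Sherman--Morrison inverse
\[
\bigl(I + (D^{-1}f) \otimes g\bigr)^{-1} = I - \frac{1}{r}\,(D^{-1}f) \otimes g
\]
explicitly. Either way, $I + (D^{-1}f) \otimes g$ is invertible, and composing with the invertible $D$ yields that $T$ is invertible, hence in particular left-invertible. The only mild subtlety is recognizing that the hypothesis ``$D$ is bounded below'' automatically secures the square-summability condition appearing in Propositions \ref{thm: kernel} and \ref{thm: range}, which is what lets the argument collapse into a single case rather than splitting on whether $\{a_n/\alpha_n\}$ lies in $\ell^2$.
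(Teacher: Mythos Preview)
Your argument is correct and in fact establishes more than the proposition claims: under these hypotheses $T$ is invertible, not merely left-invertible. The route, however, is quite different from the paper's.

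The paper argues by contradiction via a soft compactness trick: if $T$ were not bounded below, take unit vectors $h_n$ with $Th_n \to 0$; compactness of $f\otimes g$ yields a subsequence along which $(f\otimes g)h_{n_k}$ converges, hence $Dh_{n_k} = (T - f\otimes g)h_{n_k}$ converges; bounded-belowness of $D$ forces $h_{n_k}\to \tilde h$ with $\|\tilde h\|=1$, and continuity gives $T\tilde h=0$, contradicting injectivity. This argument never uses the diagonal structure of $D$ and would go through verbatim for \emph{any} bounded-below operator plus a compact perturbation.

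Your approach instead exploits the diagonal structure to upgrade ``bounded below'' to ``invertible'' (via $\inf_n|\alpha_n|>0$), then factors $T = D(I + (D^{-1}f)\otimes g)$ and finishes with the Sherman--Morrison inverse once $r\neq 0$. This is cleaner and constructive, and it delivers the stronger conclusion of invertibility directly---essentially pre-empting Theorem~\ref{thm: inv} in this special case. The trade-off is that your argument is specific to (normal or) diagonal $D$, whereas the paper's compactness argument is a reusable template; indeed the paper immediately recycles it for Proposition~\ref{Prop: closed range}. Your remarks about Propositions~\ref{thm: kernel} and~\ref{thm: range} are consistent but not actually needed for the proof: the factorization plus Sherman--Morrison is self-contained.
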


\begin{proof}
Assume by contradiction that $T = D+ f\otimes g$ is not bounded below. Then there is a sequence $\{h_n\}\subseteq\clh$ with $\|h_n\|=1$ such that $T h_n \raro 0$. By the compactness of $f\otimes g$, there exists a subsequence $\{h_{n_k}\}$ of $\{h_n\}$ such that $(f \otimes g) h_{n_k}$ converges. Then,  $D h_{n_k} = (T - f\otimes g) h_{n_k}$ converges. But since $D$ is bounded below, this gives us $h_{n_k} \raro \tilde{h}$ for some $\tilde h \in \clh$. In particular, we have $\|\tilde{h}\|=1$. On the other hand, since $T$ is a bounded linear operator, we have
\[
T \tilde{h} = \lim_{k \raro \infty} T h_{n_k} = 0,
\]
that is, $\tilde{h} \in \ker T$. But, $\ker T = \{0\}$ by our assumption, and hence $\tilde h = 0$, which contradicts the fact that $\|\tilde h\| = 1$. Therefore, $T$ is bounded below.	
\end{proof}

Although Proposition \ref{Prop: bdd below} is not directly related to the main result of this section, but perhaps fits appropriately with our present context. The following result and its proof are also along the same line and perhaps of independent interest.

\begin{Proposition}\label{Prop: closed range}
If $D$ has a closed range, then $T$ also has a closed range.
\end{Proposition}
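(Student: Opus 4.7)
The plan is to adapt the compactness argument from Proposition \ref{Prop: bdd below}. Recall the standard fact that a bounded linear operator has closed range if and only if its restriction to the orthogonal complement of its kernel is bounded below. So it suffices to prove that $T|_{(\ker T)^\perp}$ is bounded below.

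First I would observe that under our standing hypothesis, $D$ is invertible. Indeed, $D$ is injective because every $\alpha_n$ is nonzero, and the assumption that $D$ has closed range, combined with injectivity, makes $D : \clh \to \text{ran}(D)$ a continuous bijection of Banach spaces; the open mapping theorem then gives that $D$ is bounded below. (For a diagonal operator one can see this directly: otherwise some subsequence $\alpha_{n_k} \raro 0$ yields $D e_{n_k} \raro 0$ with $\|e_{n_k}\| = 1$, contradicting the fact that an injective operator with closed range must be bounded below.)

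Next I would argue by contradiction, imitating Proposition \ref{Prop: bdd below}. Suppose $T|_{(\ker T)^\perp}$ is not bounded below. Then there is a sequence $\{h_n\} \subseteq (\ker T)^\perp$ with $\|h_n\| = 1$ and $T h_n \raro 0$. Since $f \otimes g$ is compact, a subsequence $\{h_{n_k}\}$ can be chosen so that $(f \otimes g) h_{n_k}$ converges. Writing
\[
D h_{n_k} = T h_{n_k} - (f \otimes g) h_{n_k},
\]
shows that $\{D h_{n_k}\}$ is Cauchy. Since $D$ is bounded below, $\{h_{n_k}\}$ itself is Cauchy, and converges to some $\tilde h \in \clh$ with $\|\tilde h\| = 1$. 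Because $(\ker T)^\perp$ is closed, $\tilde h \in (\ker T)^\perp$. On the other hand, continuity of $T$ gives $T \tilde h = \lim_{k \raro \infty} T h_{n_k} = 0$, so $\tilde h \in \ker T$. Hence $\tilde h \in \ker T \cap (\ker T)^\perp = \{0\}$, contradicting $\|\tilde h\| = 1$. Therefore $T|_{(\ker T)^\perp}$ is bounded below, and $T$ has closed range.

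The one subtlety, and the only reason this is not a literal repetition of Proposition \ref{Prop: bdd below}, is that we cannot expect $T$ to be injective on all of $\clh$: Proposition \ref{thm: kernel} shows that $\ker T$ can be nontrivial. Passing to $(\ker T)^\perp$ at the outset is exactly what sidesteps this obstruction and reduces the proof to the same compactness-plus-bounded-below argument used earlier.
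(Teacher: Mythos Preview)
Your proof is correct and follows essentially the same route as the paper: restrict $T$ to $(\ker T)^\perp$ and rerun the compactness-plus-bounded-below argument of Proposition~\ref{Prop: bdd below}. You even make explicit the step the paper leaves implicit, namely that the standing hypothesis $\alpha_n \neq 0$ together with $D$ having closed range forces $D$ to be bounded below, which is exactly what is needed to carry the argument over.
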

\begin{proof}
Suppose $\cln = \ker T$, and suppose that $\text{ran}D$ is closed. Then $T|_{\cln^\perp}$ is injective. Assume by contradiction that $\text{ran} T$ is not closed. Then $X: = T|_{\cln^\perp}$ is not left-invertible. Proceeding exactly as in the proof of Proposition \ref{Prop: bdd below} (by replacing the role of $T$ by $X$), we will find a similar contradiction.
\end{proof}

We come now to the main result on left-invertibility of rank-one perturbations.

\begin{Theorem}\label{thm: inv}
$D + f\otimes g$ is left-invertible if and only if $D+f\otimes g$ is invertible.
\end{Theorem}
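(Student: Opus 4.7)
The nontrivial direction is showing that left-invertibility implies invertibility; the converse is immediate. My plan is to combine the three preparatory results (Lemma \ref{thm: bounded below}, Proposition \ref{thm: kernel}, and Proposition \ref{thm: range}) by using the fact that a left-invertible operator is both injective and has closed range. Let $T = D + f \otimes g$ and assume $T$ is left-invertible.

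First, since a left-invertible operator is bounded below, Lemma \ref{thm: bounded below} immediately yields that $D$ is invertible. This gives me the key summability input for free: the vector $D^{-1} f$ lies in $\clh$ and has Fourier expansion $D^{-1} f = \sum_{n=0}^{\infty} \frac{a_n}{\alpha_n} e_n$, so the sequence $\{a_n / \alpha_n\}_{n \geq 0}$ is automatically square summable. This is the first of the two hypotheses appearing in both Proposition \ref{thm: kernel} and Proposition \ref{thm: range}.

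Next, I would extract the second hypothesis, $r \neq 0$, from injectivity. Since $T$ is left-invertible, $\ker T = \{0\}$, so $0$ is not an eigenvalue of $T$. Proposition \ref{thm: kernel} says $0$ is an eigenvalue if and only if $r = 0$ \emph{and} $\{a_n / \alpha_n\}$ is square summable. As the second of these two conditions has just been established, the failure of their conjunction forces $r \neq 0$.

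With $r \neq 0$ and $\{a_n / \alpha_n\}$ square summable in hand, Proposition \ref{thm: range} gives $\{e_n\}_{n \geq 0} \subseteq \operatorname{ran} T$. Therefore $\operatorname{ran} T$ contains the dense subspace $\operatorname{span}\{e_n : n \geq 0\}$. On the other hand, being left-invertible, $T$ has closed range, so $\operatorname{ran} T = \clh$. Combined with injectivity, this makes $T$ invertible, completing the nontrivial direction. I do not expect any genuine obstacle here: all the heavy lifting is already done in the three preparatory results, and the proof is simply a matter of chaining them together via the observation that invertibility of $D$ delivers the square summability condition at no extra cost.
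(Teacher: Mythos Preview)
Your proposal is correct and follows essentially the same route as the paper: use Lemma \ref{thm: bounded below} to get invertibility of $D$ (hence square summability of $\{a_n/\alpha_n\}$), combine this with injectivity and Proposition \ref{thm: kernel} to obtain $r \neq 0$, and then apply Proposition \ref{thm: range} together with the closed range of $T$ to conclude surjectivity. The only cosmetic difference is that the paper frames the last step as a proof by contradiction (assuming $T$ is not invertible to force $\{e_n\} \nsubseteq \operatorname{ran} T$), whereas you argue directly; the logical content is identical.
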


\begin{proof}
For the nontrivial direction, assume that $T = D+ f\otimes g$ is left-invertible. Assume by contradiction that $T$ is not invertible. Since, in particular, $\text{ran} T$ is closed, $\{e_n\}_{n \geq 0} \nsubseteq \text{ran} T$. Now by Proposition \ref{thm: kernel}, either $r \neq 0$ or the sequence $\{\frac{a_n}{\alpha_n}\}_{n\geq 0}$ is not square summable. On the other hand, we know from Lemma \ref{thm: bounded below} that $D$ is invertible, and hence
\[
D^{-1}f = \sum_{n=0}^{\infty} \frac{a_n}{\alpha_n} e_n \in \clh.
\]
This implies, of course, that $\{\frac{a_n}{\alpha_n}\}_{n\geq 0}$ is a square summable sequence, and hence $r \neq 0$. As a consequence, we can apply Proposition \ref{thm: range} to $T$: the basis vectors $\{e_n\}_{n \geq 0} \subseteq  \text{ran} T$; which is a contradiction. This proves that $T$ is invertible.
\end{proof}

If we know that $D$ is invertible (which anyway follows from Lemma \ref{thm: bounded below}) and $r \neq 0$, then the surjectivity of $T = D+ f\otimes g$ in the above proof also can be obtained as follows: Observe that
\[
1+ \la D^{-1}f , g \ra = 1 + \sum_{n = 0}^\infty \frac{a_n\bar{b}_n}{\alpha_n}=r.
\]
Then for each $y \in \clh$, we consider
\[
x = D^{-1}y - \frac{1}{r}\la D^{-1}y ,g\ra D^{-1}f.
\]
We deduce easily that $T x = y$, which completes the proof of the fact that $T$ is onto.

\newsection{An example}\label{sec: Davidson}

Let $T$ be a bounded linear operator on $H^2(\D)$. Suppose
\[
[T] = \begin{bmatrix}
	0 & 0 & 0  & \dots
	\\
	a_{01} & 0 & 0 & \ddots
	\\
	a_{02} & a_{12} & 0  & \ddots
	\\
	a_{03} & a_{13} & a_{23} & \ddots
	\\
	\vdots & \ddots & \ddots & \ddots
\end{bmatrix},
\]
the matrix representation of $T$ with respect to the standard orthonormal basis $\{z^n, n \geq 0\}$ of $H^2(\D)$. Clearly, $T(z^n) \subseteq z^{n+1}H^2(\D)$, and hence
\[
T^n (H^2(\D)) \subseteq z^n H^2(\D) \qquad (n\geq 0).
\]
It follows that
\[
\bigcap_{n =0}^\infty T^n H^2(\D) \subseteq \bigcap_{n =0}^\infty z^n H^2(\D) = \{0\},
\]
that is, $T$ is analytic. In particular, for each $\alpha$ and $\beta$ in $\C$, the matrix operator
\[
[T_{\alpha, \beta}] = \begin{bmatrix}
	0 & 0 & 0  & 0 & \dots
	\\
	\alpha & 0 & 0 & 0 & \ddots
	\\
	\beta & 0 & 0  & 0 & \ddots
	\\
	0 & 1 & 0 & 0 &  \ddots
	\\
	0 & 0 & 1 & 0 &  \ddots
	\\
	\vdots & \ddots & \ddots & \ddots & \ddots
\end{bmatrix},
\]
defines an analytic operator $T_{\alpha, \beta}$ on $H^2(\D)$. Moreover, one can show that
\[
T_{\alpha, \beta} = M_z^2 + (\alpha z + (\beta - 1) z^2) \otimes 1,
\]
that is, $T_{\alpha, \beta}$ is a rank-one perturbation of the shift $M_z^2$ on $H^2(\D)$. Next, we compute $c(T_{\alpha, \beta}; f, g)$, where $f = \alpha z + (\beta - 1) z^2$ and $g = 1$. Since
\[
\langle M_z^{*2} f,g \rangle_{H^2(\D)} = \beta - 1,
\]
and $\|M_z^{*2} f\|^2 = |\beta - 1|^2$, and $\|f\|^2 = |\alpha|^2 + |\beta - 1|^2$, it follows that
\[
c(T_{\alpha, \beta}, \alpha z + (\beta - 1) z^2, 1) = |\alpha|^2 + |\beta|^2.
\]
Thus we have proved:

\begin{Proposition}
Let $(\alpha, \beta) \in \C^2 \setminus \{(0,0)\}$, and suppose $f = \alpha z + (\beta - 1) z^2$ and $g = 1$. Then:

(1) $T_{\alpha, \beta}$ is a shift on $H^2(\D)$,

(2) $T_{\alpha, \beta} = M_z^2 + f \otimes g$, and

(3) $c(M_z^2; f,g) = |\alpha|^2 + |\beta|^2$.
\end{Proposition}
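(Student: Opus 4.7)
The plan is to treat the three claims in the order (2), (3), (1), since the last one will follow by combining (3) with Theorem \ref{thm: Left inv} and the analyticity already established just before the proposition via the Shimorin characterisation of left-invertible analytic operators.

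First I would verify (2) by checking that both sides agree on the orthonormal basis $\{z^n\}_{n \geq 0}$. The rank-one piece $f \otimes g = f \otimes 1$ annihilates $z^n$ for $n \geq 1$ and sends $1$ to $f = \alpha z + (\beta - 1) z^2$. Therefore $(M_z^2 + f\otimes g)(1) = z^2 + \alpha z + (\beta - 1) z^2 = \alpha z + \beta z^2$, which matches the zeroth column of the matrix $[T_{\alpha, \beta}]$, while $(M_z^2 + f\otimes g)(z^n) = z^{n+2}$ for $n \geq 1$, matching the remaining columns.

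Next I would compute (3) directly from the definition \eqref{eqn: def of c}. Since $M_z^{*2}$ kills constants and linear terms and shifts higher powers down by two, we get $M_z^{*2} f = \beta - 1$ (a constant), so that $\|M_z^{*2} f\|^2 = |\beta - 1|^2$ and $\langle M_z^{*2} f, g\rangle = \langle \beta - 1, 1\rangle = \beta - 1$. Since $\|g\|^2 = 1$ and $\|f\|^2 = |\alpha|^2 + |\beta - 1|^2$ by Pythagoras, the two terms in $c(M_z^2; f, g)$ become $(|\alpha|^2 + |\beta - 1|^2 - |\beta - 1|^2) \cdot 1 = |\alpha|^2$ and $|1 + (\beta - 1)|^2 = |\beta|^2$, yielding $|\alpha|^2 + |\beta|^2$.

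Finally, for (1), the preamble to the proposition has already shown (from the upper-triangular pattern of $[T_{\alpha, \beta}]$ with respect to $\{z^n\}$) that $T_{\alpha, \beta}$ is analytic, i.e.\ $\bigcap_n T_{\alpha,\beta}^n H^2(\D) = \{0\}$. When $(\alpha, \beta) \neq (0, 0)$, part (3) gives $c(M_z^2; f, g) > 0$, so Theorem \ref{thm: Left inv} (applied with $V = M_z^2$, which is indeed an isometry on $H^2(\D)$) guarantees that $T_{\alpha, \beta}$ is left-invertible. A left-invertible analytic operator is unitarily equivalent to the shift on some analytic Hilbert space by Shimorin's theorem quoted in Section \ref{sec: Analytic}, so $T_{\alpha, \beta}$ is a shift. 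The only non-mechanical step is this last invocation; everything else is just plugging into definitions.
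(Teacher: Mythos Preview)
Your proposal is correct and follows essentially the same route as the paper: verify the rank-one representation on the basis, compute $c(M_z^2;f,g)$ directly from \eqref{eqn: def of c}, and then combine the analyticity established in the preamble with Theorem~\ref{thm: Left inv} and Shimorin's characterisation to deduce that $T_{\alpha,\beta}$ is a shift. The only cosmetic difference is the ordering of the steps.
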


We recall in passing that $T_{\alpha, \beta}$ is a shift means the existence of an analytic Hilbert space $\clh_k$ and a unitary $U: H^2(\D) \raro \clh_k$ such that $T_{\alpha, \beta} = U^* M_z U$ (see the discussion preceding Proposition \ref{Aly 2}).

We continue with the matrix representation $[T_{\alpha, \beta}]$. It is immediate that $T_{\alpha, \beta}$ is an isometry if and only if
\[
|\alpha|^2+|\beta|^2 = 1.
\]
Denote by $H^2_{\alpha , \beta}(\D)$ the closed codimension one subspace of $H^2(\D)$ with orthonormal basis $\{\alpha +\beta z , z^2 ,z^3 , \ldots\}$. Clearly, $H^2_{\alpha , \beta}(\D)$ is an invariant subspace of $M_z^2$. One can verify
straightforwardly that the map $U: H^2(\D) \raro H^2_{\alpha , \beta}(\D)$ defined by
\[
U z^n = \begin{cases}
\alpha + \beta z & \mbox{if } n=0 \\
z^{n+1} & \mbox{otherwise}
\end{cases}
\]
is a unitary operator and
\[
U T_{\alpha, \beta} = M_z^2 U,
\]
that is, $T_{\alpha, \beta}$ on $H^2(\D)$ and $M_z^2|_{H^2_{\alpha , \beta}(\D)}$ on $H^2_{\alpha , \beta}(\D)$ are unitarily equivalent. The operator $M_z^2|_{H^2_{\alpha , \beta}(\D)}$ on $H^2_{\alpha , \beta}(\D)$, for $(\alpha, \beta) \in \C^2$ such that $|\alpha|^2+|\beta|^2 = 1$, has been considered in \cite{Paulsen Raghupati} in the context of invariant subspaces and a constrained Nevanlinna-Pick interpolation problem. Clearly, in the context of perturbation theory, it is worth exploring and explaining the results of \cite{Paulsen Raghupati}.

\newsection{Concluding remarks}\label{sec: concluding}

We begin by computing $c(V; f,g)$ for rank-one perturbations that are isometries. Suppose $V \in \clb(\clh)$ is an isometry and $f$ and $g$ are vectors in $\clh$. It is curious to observe that
\[
c(V; f,g) = 1,
\]
whenever $V + f \otimes g$ is an isometry. Indeed, in the present case, by \eqref{eqn: Iso Old classif}, there exist a unit vector $h \in \clh$ and a scalar $\alpha$ of modulus one such that $f = (\alpha - 1) h$ and $g = V^* h$. Then \eqref{eqn: cVfg} yields
\[
\begin{split}
c(V; f,g) -1 & = |\alpha - 1|^2 \|V^*h\|^2 + 2 (\text{Re}(\alpha -1)) \|V^* h\|^2 + |\alpha - 1|^2 \|V^*h\|^4 (1 - 1)
\\
& = (|\alpha - 1|^2 + 2 \text{Re}(\alpha -1)) \|V^*h\|^2
\\
& = 0,
\end{split}
\]
as $|\alpha| = 1$. This completes the proof of the claim.

It would be interesting to investigate the nonnegative number $c(V; f,g)$ in terms of analytic and geometric invariants, if any, of rank-one perturbations of isometries. This is perhaps a puzzling question for which we do not have any meaningful answer or guess at this moment.

We conclude this paper by making some additional comments on (non-analytic features of) perturbations of diagonal operators. The following easy-to-prove proposition says that rank-one perturbations of common diagonal operators do not fit well with shifts on reproducing kernel Hilbert spaces.

\begin{Proposition}
Let $D \in \clb(\clh)$ be a Fredholm diagonal operator, and let $f, g \in \clh$. Then $D+ f \otimes g$ cannot be represented as shift.
\end{Proposition}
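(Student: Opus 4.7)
The plan is to derive a contradiction by combining three ingredients: shifts on analytic Hilbert spaces are both analytic and injective, the Fredholm index is preserved under compact (in particular, rank-one) perturbation, and a normal operator has Fredholm index zero.

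Suppose toward a contradiction that $T := D + f \otimes g$ is a shift, i.e., unitarily equivalent to $M_z$ on some analytic Hilbert space $\clh_k$ over a domain $\Omega \subseteq \C$. First I would record the two properties of shifts that matter here. Analyticity, namely
\[
\bigcap_{n \geq 0} T^n \clh = \{0\},
\]
is built into the definition recorded in Section \ref{sec: Analytic}. Injectivity, $\ker T = \{0\}$, follows because $M_z h = 0$ forces $w h(w) = 0$ for every $w \in \Omega$, and a function analytic on a domain that vanishes off a single point must be identically zero.

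Next, since $f \otimes g$ is rank one and hence compact, $T$ is a compact perturbation of the Fredholm operator $D$. By Atkinson's theorem, $T$ is itself Fredholm and
\[
\mathrm{ind}(T) = \mathrm{ind}(D).
\]
Because $D$ is diagonal and therefore normal, $\ker D$ and $\ker D^*$ have the same dimension, so $\mathrm{ind}(D) = 0$, and hence $\mathrm{ind}(T) = 0$.

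Combining $\ker T = \{0\}$ with $\mathrm{ind}(T) = 0$ forces $\ker T^* = \{0\}$, while Fredholmness guarantees that $\mathrm{ran}\, T$ is closed; hence $T$ is invertible. But then $T^n \clh = \clh$ for all $n \geq 0$, so $\bigcap_{n \geq 0} T^n \clh = \clh \neq \{0\}$, contradicting analyticity. The only mildly delicate point in this plan is justifying that $\ker M_z = \{0\}$ on an analytic Hilbert space; everything else reduces to standard Fredholm theory.
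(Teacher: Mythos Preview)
Your argument is correct and is essentially the paper's own proof: both derive a contradiction from the stability of the Fredholm index under the rank-one perturbation, using $\mathrm{ind}(D)=0$ together with the injectivity and analyticity of a shift. The paper compresses your final two steps into the single assertion $\mathrm{ind}(M_z)<0$, whereas you spell out that index zero plus injectivity forces invertibility, which then clashes with analyticity; these are the same reasoning, and your version is arguably more transparent about why the strict inequality (equivalently, non-surjectivity) holds.
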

\begin{proof}
Assume the contrary, that is, assume that $D + f \otimes g$ is unitarily equivalent to $M_z$ on some reproducing kernel Hilbert space $\clh_k$. Since $D$ is Fredholm, and $M_z$ and $D + f \otimes g$ are unitarily equivalent, we have ${ind}(M_z) = ind (D) = 0$. On the other hand, since $M_z$ is injective, it follows that
\[
{ind}(M_z) = \dim \ker M_z-\dim \ker M_z^* < 0,
\]
which is a contradiction.
\end{proof}

In the context of Theorem \ref{thm: inv}, we remark that rank-one perturbations of diagonal operators need not be left-invertible: Consider a compact diagonal operator $D$ (for instance, consider $D$ with diagonal entries $\{\frac{1}{n}\}$). Then a rank-one perturbation of $D$ is also compact, and hence the perturbed operator cannot be left-invertible.

In Lemma \ref{thm: bounded below}, we prove that if $D + f \otimes g$ is bounded below, then $D$ is invertible. This was one of the key tools in proving Theorem \ref{thm: inv}: $D + f \otimes g$ is left-invertible if and only if $D + f \otimes g$ is invertible. Of course, we assumed that the Fourier coefficients of $f$ and $g$ are nonzero. Here, we would like to point out that rank-one perturbation of an invertible operator need not be invertible. In fact, the invertibility property of rank-one perturbations of invertible operators can be completely classified (see \cite[Lemma 2.7]{Ionascu}): Let $D$ be an invertible diagonal operator. Then  $D + f \otimes g$ is invertible if and only if
\[
1 + \langle D^{-1} f, g \rangle \neq 0.
\]
Finally, in the context of left-invertibility, consider $D = I_{\clh}$ and choose $f$ and $g$ from $\clh$ such that $\langle f, g \rangle = -1$. It is easy to see that $c(D; f, g) = 0$, and hence, $D + f \otimes g$ is not left-invertible.

\vspace{0.1in}

\noindent\textbf{Acknowledgement:}
The research of the second named author is supported in part by Core Research Grant, File No: CRG/2019/000908, by the Science and Engineering Research Board (SERB), Department of Science \& Technology (DST), Government of India.

\vspace{0.1in}

\bibliographystyle{amsplain}

\end{document}